\title{Any finite group acts freely and homologically trivially on a product of spheres}
\author{James F. Davis}
\date{}
\newcommand{\C}{{\mathbb C}}
\newcommand{\Q}{{\mathbb Q}}
\newcommand{\R}{{\mathbb R}}
\newcommand{\Z}{{\mathbb Z}}
\newcommand{\wt}[1]{{\widetilde{#1}}}
\newcommand{\step}[1]{{\noindent Step {#1}:}}
\DeclareMathOperator{\Hom}{{Hom}}
\DeclareMathOperator{\lift}{Lift}
\DeclareMathOperator{\rk}{{rk}}
\DeclareMathOperator{\frk}{{frk}}
\DeclareMathOperator{\hfrk}{{hfrk}}
\DeclareMathOperator{\UCT}{{UCT}}
\newtheorem{theorem}{Theorem}
\newtheorem{corollary}[theorem]{Corollary}
\newtheorem{lemma}[theorem]{Lemma}
\theoremstyle{definition}
\newtheorem{remark}[theorem]{Remark}
\begin{document}
\maketitle

\section{Introduction}

We state our main theorem,  give a short survey of free actions on a product of spheres,   give the title of this paper as a corollary, and finally give a proof of the theorem below.

\begin{theorem} \label{main}
Suppose $K$ is a finite CW-complex with finite fundamental group $G$.  Suppose its universal cover $\wt K$ is homotopy equivalent to a product of spheres 
$$
X = S^{n_1} \times \cdots \times S^{n_k}$$
 with all $n_i > 1$.  Then for any $n \geq  \dim X$, $G$ acts smoothly and freely on $X \times S^n$.  If $G$ acts homologically trivially on $\wt K$, then the $G$-action  on $X \times S^n$ is homologically trivial.
\end{theorem}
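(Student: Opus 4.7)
The plan is to construct a closed smooth manifold $M$ with $\pi_1(M) = G$ whose universal cover is diffeomorphic---not merely homotopy equivalent---to $X \times S^n$. The deck transformation action of $G$ on $\wt M \cong X \times S^n$ is then automatically smooth and free, and homological triviality can be traced through the construction.

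First I would embed $K$ in a Euclidean space $\R^N$ for $N$ sufficiently large, and let $W$ be a regular neighborhood of the embedded copy. Then $W$ is a compact smooth manifold with boundary, homotopy equivalent to $K$; in particular $\pi_1 W = G$ and $\wt W \simeq X$. For $N \geq \dim K + 3$, $\partial W$ is a closed smooth manifold with $\pi_1(\partial W) = G$, and the plan is to identify $\wt{\partial W}$ diffeomorphically with $X \times S^{N - \dim K - 1}$. Once this is done, any $n \geq \dim X$ can be reached by varying $N$, or by taking products with additional spheres and applying $h$-cobordism cancellations.

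The heart of the argument is the identification of $\wt{\partial W}$ with $X \times S^n$. Up to homotopy, the collapse $\partial W \to K$ represents a Spivak-type spherical fibration over $K$; pulling back to $\wt K \simeq X$ gives such a fibration over $X$. Because $X$ is a product of spheres its tangent bundle is stably trivial, so the Spivak normal fibration of $X$ itself is trivial, which forces our spherical fibration over $X$ to be fiber-homotopically trivial. In high codimension the spherical fibration refines to a smooth sphere bundle, and stable parallelizability of $X$ together with a surgery argument upgrades fiber-homotopy triviality to smooth triviality, giving $\wt{\partial W} \cong X \times S^n$.

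For the homological-triviality statement, the $G$-action on $\wt{\partial W}$ restricts to the given action on $\wt K \simeq X$---trivial on $H_*(\wt K)$ by hypothesis---and acts on the $S^n$ fiber by an action that can be chosen orientation preserving, hence trivially on $H_*(S^n)$; the K\"unneth formula then gives a trivial action on $H_*(X \times S^n)$. I expect the main obstacle to be the step of converting a fiber-homotopy trivialization into a genuine smooth trivialization of the normal sphere bundle: this is where the stable parallelizability of $X$, together with the freedom to take $n$ (and hence $N$) large, must be exploited to kill bundle-theoretic or surgery obstructions arising from the fact that $K$ is only a CW complex, not a priori a manifold.
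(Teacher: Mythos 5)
Your setup---embed $K$, take a regular neighborhood $W$, and try to identify the universal cover of $\partial W$ with $X\times S^n$---is exactly the paper's starting point, but the step you flag as "the main obstacle" is a genuine gap, and the route you propose for closing it does not work as stated. Pulling back the Spivak fibration $\partial W\to K$ to $\wt K\simeq X$ and using stable parallelizability of $X$ does show the pulled-back spherical fibration is fiber-homotopically trivial, but that only yields a \emph{homotopy equivalence} $\wt{\partial W}\simeq X\times S^n$. Upgrading this to a diffeomorphism is the whole problem: a homotopy equivalence of closed simply-connected manifolds is homotopic to a diffeomorphism only if its class in the surgery structure set $\mathcal{S}(X\times S^n)$ vanishes, and that set is generally large; "fiber-homotopy trivial implies smoothly trivial" fails in general (this is the difference between $BO$ and $BG$, i.e., the $J$-homomorphism), and "the spherical fibration refines to a smooth sphere bundle in high codimension" is itself an unproved assertion equivalent to what you are trying to show. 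Nothing in your sketch identifies the obstruction or explains why it dies.

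The paper closes this gap without any surgery, using the fact that the \emph{whole} manifold $\wt W$ (not just its boundary) is homotopy equivalent to $X$. By general position the homotopy equivalence $X\to \wt W$ is homotopic to a smooth embedding into the interior; its normal bundle $\nu$ satisfies $\nu\oplus\tau_X\cong\varepsilon^{N}$ because the interior of $\wt W$ covers an open subset of Euclidean space, so $\nu$ is stably trivial, hence trivial since $\operatorname{rk}\nu>\dim X$; thus the boundary of a tubular neighborhood $N(X)$ is literally $X\times S^n$. The region between $\partial N(X)$ and $\partial\wt W$ is then a simply-connected h-cobordism (simple connectivity of $\partial\wt W$ by codimension $\geq 3$ general position, vanishing relative homology by excision), so the h-cobordism theorem gives the diffeomorphism. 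Two smaller points: to achieve the sharp bound $n\geq\dim X$ rather than $n\geq\max\{\dim K,\dim X\}$ you must first replace $K$ by a finite complex of dimension $\dim X$ (the paper does this via Wall's finiteness criterion); and your K\"unneth argument for homological triviality glosses over both the orientation of the "fiber class" and an extension problem---the paper resolves the latter by tensoring with $\Q$ and invoking Maschke's theorem before passing back to the integral lattice.
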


The study of free actions of finite groups on spheres was a motivation for early developments in algebraic $K$-theory and a playground for surgery theory.  The quintessential result is due to Madsen-Thomas-Wall \cite{MTW}: a finite group acts freely on some sphere if and only if for all primes $p$, all subgroups of order $2p$ and $p^2$ are cyclic.  

It is a natural generalization to investigate free actions of finite groups on a product of spheres.   We always assume our spheres are simply-connected, that is, a sphere is $S^n$ for $n > 1$.  Let $G$ be a finite group.  The {\em rank of $G$} is the largest integer $k$ so that there exists a prime $p$ and a subgroup of $G$ isomorphic to $(\Z/p)^k$.  The {\em free rank of $G$} is the minimal $k$ so that $G$ acts freely on a $k$-fold product of spheres.  The rank conjecture states that for  finite groups whose rank is bigger than one, the rank of $G$ equals the free rank of $G$.  A full solution to this conjecture seems elusive.

A secondary conjecture is that every finite group acts freely and homologically trivially on a product of spheres.  Here are four motivations for the conjecture and in particular for preferring homologically trivial actions.  First, homologically trivial actions arise naturally in the study of free group actions on a sphere.  The Lefschetz fixed point theorem shows that every free action on an odd-dimensional sphere is homologically trivial and that free actions on even-dimensional spheres are rather dull -- the only non-trivial group that arises is cyclic of order two.   Second,  in most
of the examples where the inequality $\text{rk }G \geq \text{frk }G$ has been proved, the group actions constructed have been homologically trivial.  For example, a faithful, finitely generated $\C[G]$-module $W$ has {\em fixity $f$} if $f = \max \{\dim W^g \mid g \in G, g \not = e\}$.  It is easy to show that $f +1 \geq \rk G$.  If $f = 0$ then $G$ acts freely on $S(W)$, while if $f$ is 1, respectively 2, it has been recently shown in \cite{ADU}, respectively \cite{OU10},  that $G$ acts freely and homologically trivially and on product of two, respectively three, spheres.  Also, the actions constructed in \cite{H} are homologically trivial.  Third, there is a homotopy theoretic reason for preferring homologically trivial actions.  If a finite group $G$ of order $q$ acts freely on a simply-connected $\Z_{(q)}$-local CW complex $Z$, then the action is homologically trivial if and only if the space $Z$ (with the original $G$-action) is equivariantly homotopy equivalent to the $G$-space $Z_t \times EG$ where $Z_t$ denotes the space $Z$ with a trivial $G$-action.  This observation is key for the technique of propagation of group actions \cite{DW}.  Finally, it is not difficult to show that any finite group $G$ acts freely on a product of $S^3$'s.  Indeed $G$ acts freely on $\coprod_{g \in G} i_! S^3$ where $S^3$ is given a free $\langle g \rangle$-action and $i_! S^3$ is the co-induced $G$-space, $i_! S^3 = \text{map}_{\langle g \rangle}(G, S^3)$.  Alas, this action is far from homologically trivial.

The question as to whether any finite group acts freely and homologically trivially was mentioned as an open question in \cite{OU10} and \cite{OU11} and was mentioned by the author as an open problem in the problem session of the 2005 BIRS conference Homotopy and Group Actions.   Motivated by this,  \" Ozg\"un \" Unl\" u and Erg\"un Yal\c c\i n proved the following theorem.

\begin{theorem}[\cite{OU12}]  \label{OY} Let $G$ be a finite group.  If $G$ has a  faithful complex representation with fixity $f$, then $G$ acts freely, cellularly, and homologically trivially on a finite complex which has the homotopy type of a $(f+1)$-fold product of  spheres.
\end{theorem}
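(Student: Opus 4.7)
The plan is to induct on the fixity $f$ of $W$. At each step I would construct an equivariant complex sphere bundle over the current stage that reduces the fixity by one while preserving homological triviality; after $f$ inductions the action becomes free, yielding one additional sphere factor per step on top of the initial $S(W)$. The base case $f=0$ is immediate: $W$ is a free representation, so $G$ acts freely on $S(W)$, a one-fold product of spheres. Since $G$ acts unitarily it preserves the complex (hence real) orientation of $W$, so every nontrivial element acts with degree $+1$ on the top homology of the odd-dimensional sphere $S(W)$, forcing the action on $H_*(S(W))$ to be trivial.

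For the inductive step, suppose the theorem is known for fixity strictly less than $f$ and let $W$ have fixity $f>0$. The crucial construction is a $G$-equivariant complex vector bundle $\eta \to S(W)$ whose fiber over each point $x \in S(W)^H$ (for $H \ne 1$) is an $H$-representation of strictly smaller fixity than that of the ambient $W^H$. Then the diagonal $G$-action on the sphere bundle $E := S(\eta)$ has fixity at most $f-1$, and, choosing $\dim_\C \eta$ sufficiently large relative to $\dim_\R S(W)$ and using simple connectivity of $S(W)$, the total space $E$ is a finite $G$-CW complex homotopy equivalent to $S(W) \times S^{2\dim_\C \eta - 1}$. I would attempt to build $\eta$ stratum-by-stratum on the isotropy filtration of $S(W)$ using Bredon-equivariant obstruction theory on the orbit category of $G$; the hypothesis that the singular set of $S(W)$ has real dimension at most $2f-1$ is what makes the relevant obstruction groups vanish. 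The inductive hypothesis then applies to the residual action on $E$ and produces the desired $(f+1)$-fold product of spheres.

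Homological triviality propagates through the induction because $\eta$ is complex: the $G$-action on $E$ preserves fiber orientations, so in the Serre spectral sequence of the sphere bundle $E \to S(W)$ the $G$-module structure on the $E_2$ page $H_*(S(W)) \otimes H_*(S^{2\dim_\C \eta - 1})$ is trivial (using the base-case argument on each factor), and this forces the $G$-action on $H_*(E)$ to be trivial. The main obstacle is undoubtedly the construction of the equivariant bundle $\eta$ realizing the prescribed local representations; verifying that the Bredon-cohomological obstructions vanish in the dimensions allotted by the fixity hypothesis is the technical heart of the argument, and is where the specific quantitative form of the fixity hypothesis enters critically. Everything else—propagating homological triviality, pruning the sphere bundle to a product up to homotopy, and keeping the construction cellular and finite—is standard once $\eta$ is available.
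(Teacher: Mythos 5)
First, note that the paper does not prove this statement: it is quoted from \cite{OU12}, and the paper only gives a two-sentence sketch of the Ünlü--Yalçın argument. Measured against that sketch, your overall strategy is the right one --- induct on fixity, start from $S(W)=V_1(W)$, and at each stage take an equivariant sphere bundle modeled on the Stiefel fibration $V_{i+1}(W)\to V_i(W)$ (whose fiber over a frame drops the fixity by one), trivialize it non-equivariantly to keep the homotopy type a product of spheres, and use obstruction theory over the orbit category to realize the bundle over the replacement space $X_i$. That is essentially what \cite{OU12} does.

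There is, however, a genuine gap at the trivialization step. You assert that ``choosing $\dim_\C\eta$ sufficiently large relative to $\dim_\R S(W)$ and using simple connectivity'' makes $S(\eta)$ homotopy equivalent to the product $S(W)\times S^{2\dim_\C\eta-1}$. This is false: increasing the rank (by adding trivial summands) does not change the stable fiber homotopy class of the sphere bundle, and there are bundles over, say, $S^4$ whose image under the $J$-homomorphism is nonzero and which therefore remain fiber homotopically nontrivial at every rank. The correct mechanism --- stated explicitly in the paper --- is that the group of stable fiber homotopy classes of sphere bundles over a finite complex is \emph{finite} (because the stable homotopy groups of spheres are finite), so some finite Whitney sum $\eta^{\oplus N}$ (equivalently, iterated fiber join) is fiber homotopy trivial; this operation is compatible with the equivariant structure and still reduces isotropy. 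A similar finiteness argument, rather than the dimension count on the singular set that you invoke, is what kills the Bredon-cohomological obstructions to constructing the equivariant bundle/spherical fibration in the first place --- that construction is the technical heart of \cite{OU12} and you correctly flag it as unproved in your outline. Finally, your spectral-sequence claim that a trivial $E_2$-page forces a trivial action on $H_*(E)$ leaves an extension problem; it is resolved as in the paper's Step 8 (tensor with $\Q$, apply Maschke, and use that the homology is free abelian), not by the $E_2$ statement alone.
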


Although we only use the statement of Theorem \ref{OY}, we discuss its proof  in order to contrast its algebraic topological techniques with the geometric topological techniques needed for our own result.   Let $W$ be a faithful, finitely generated $\C G$-module with fixity $f$.   Then $G$ acts freely and homologically trivially on the Stiefel manifold $V_{f+1}(W)$ which has both the homology and the rational homotopy type of a $(f+1)$-fold product of  spheres.   \" Unl\" u and Yal\c c\i n use this as inspiration and inductively construct finite $G$-CW-complexes $X_1, X_2, \ldots, X_{f+1}$ with $X_1 = V_1(W)$ and so that $X_i$ has the homotopy type of an $i$-fold product of spheres and whose isotropy is contained in the isotropy of $V_i(W)$.   The basic idea (but oversimplified) is that Stiefel manifolds are iterated sphere bundles, and any sphere bundle over a finite complex becomes fiber homotopically trivial after a finite Whitney sum since the stable homotopy groups of spheres are finite.

Since every finite group admits a faithful representation complex representation with finite fixity (take $V = \C[G]$), the following is a corollary of our main theorem and the theorem of   \" Unl\" u and Yal\c c\i n.  

\begin{corollary}
 A finite group acts smoothly, freely, and homologically trivially on a product of spheres.
\end{corollary}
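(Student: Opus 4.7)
The plan is to chain Theorem \ref{OY} into Theorem \ref{main}; the only real content beyond invoking the two results is to exhibit, for an arbitrary finite group $G$, a faithful $\C G$-module of finite fixity. The natural candidate is the regular representation $V = \C[G]$: it is manifestly faithful, and for a non-identity element $g$ of order $m$ the action permutes the basis $G$ in $|G|/m$ orbits of size $m$, giving $\dim_{\C} V^g = |G|/m$ by an elementary fixed-point calculation. The maximum over $g \neq e$ is $|G|/p$, where $p$ is the smallest prime dividing $|G|$, which is certainly finite.

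Feeding this into Theorem \ref{OY} produces a finite $G$-CW-complex $K$ carrying a free, cellular, homologically trivial $G$-action, with $K$ homotopy equivalent to an $(f+1)$-fold product of spheres $X = S^{n_1} \times \cdots \times S^{n_{f+1}}$ in which all $n_i > 1$. In particular $K$ is simply-connected, so the quotient $\bar K := K/G$ is a finite CW-complex with $\pi_1(\bar K) \cong G$ whose universal cover is $K$, itself homotopy equivalent to $X$.

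These are precisely the hypotheses of Theorem \ref{main}, with its ``$K$'' being our $\bar K$ and its ``$\wt K$'' being our $K$. Moreover the deck transformation action on $\wt K$ is exactly the original \"Unl\"u--Yal\c cin action, which is homologically trivial, so the strengthened conclusion of Theorem \ref{main} applies. We therefore obtain a smooth, free, homologically trivial $G$-action on $X \times S^n$ for every $n \geq \dim X$, proving the corollary. The whole argument is a matching of hypotheses, so there is no genuine obstacle once both input theorems are available; if anything, the step that merits explicit verification is the finite-fixity claim for $\C[G]$, but even that reduces to counting orbits of a cyclic subgroup on itself.
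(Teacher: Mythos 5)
Your proposal is correct and follows exactly the paper's own route: the paper derives the corollary by observing that $\C[G]$ is a faithful representation of finite fixity, feeding it into Theorem \ref{OY}, and then applying Theorem \ref{main} to the quotient complex. Your added details (the orbit-counting computation $\dim_\C V^g = |G|/\mathrm{ord}(g)$ and the verification that $K/G$ satisfies the hypotheses of Theorem \ref{main}) are accurate elaborations of what the paper leaves implicit.
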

 
Our main theorem sheds some light on the rank conjecture, albeit at the cost of an extraneous sphere.  Define the {\em homotopy free rank $\hfrk  G$} to be the minimal $k$ so that $G$ acts freely and cellularly on a finite $CW$-complex having the homotopy type of a $k$-fold product of spheres.  Progress has been made on the variant conjecture that $\text{rk } G =\text{hfrk } G$ for groups whose rank is greater than one.  In particular, Adem and Smith \cite{AS} proved this for rank 2 $p$-groups and Klaus \cite{Klaus} proved it for rank 3 $p$ groups with $p$ odd.  

As a corollary of our main theorem, one has:

\begin{corollary}
 $\frk G \leq 1+ \hfrk G$  for any finite group $G$.
\end{corollary}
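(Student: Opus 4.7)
The plan is to deduce this directly from Theorem \ref{main} by taking the orbit space of a minimizer for $\hfrk G$ as the input complex $K$.

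First I would set $k = \hfrk G$ and choose, by definition, a finite CW-complex $Y$ carrying a free cellular $G$-action such that $Y$ is homotopy equivalent to $X = S^{n_1} \times \cdots \times S^{n_k}$ with all $n_i > 1$. Since every $n_i > 1$, the product $X$, and hence $Y$, is simply connected. Therefore the orbit space $K := Y/G$ is a finite CW-complex with $\pi_1(K) \cong G$ and universal cover $\wt K = Y \simeq X$. Thus $K$ satisfies exactly the hypotheses of Theorem \ref{main}.

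Applying Theorem \ref{main} with this $K$, for any $n \geq \dim X$ one obtains a smooth free $G$-action on $X \times S^n = S^{n_1} \times \cdots \times S^{n_k} \times S^n$, which is a $(k+1)$-fold product of spheres. By definition of the free rank, this gives $\frk G \leq k+1 = 1 + \hfrk G$, as desired.

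I do not expect any genuine obstacle; the content of the corollary is entirely carried by Theorem \ref{main}, and the only thing to check is the bookkeeping that a witness for $\hfrk G$ produces, by passage to the quotient, a complex $K$ of the form required by that theorem. The mild subtlety worth flagging is the use of simple-connectivity of a product of spheres (guaranteed by the convention $n_i > 1$) to ensure that $Y \to Y/G$ is actually the universal cover, so that $\wt K$ really does have the homotopy type of a product of spheres rather than merely being a free $G$-space of that homotopy type.
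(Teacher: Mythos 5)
Your proof is correct and follows exactly the intended route: the paper leaves this corollary as an immediate consequence of Theorem \ref{main}, and your argument (pass to the orbit space of a witness for $\hfrk G$, check it satisfies the hypotheses of the theorem, and count spheres) is precisely that deduction. The point you flag about simple-connectivity guaranteeing that $Y \to Y/G$ is the universal cover is the right detail to check, and it is covered by the paper's standing convention that all spheres are simply connected.
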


Given a free action of a finite group on a $CW$-complex $Z$ having the homotopy type of a $k$-fold product of spheres, one wonders why the surgery theoretic machine does not apply to construct a free action on an honest $k$-fold product of spheres.  It is not difficult to show that the orbit space is a Poincar\'e complex, but there does not seem to be effective techniques to determine if the Spivak bundle reduces to a vector bundle.  

\section{Proof of Theorem \ref{main}}

We now embark on the proof of our main theorem.   Suppose $K$ is a finite $CW$-complex with finite fundamental group $G$ and universal cover homotopy equivalent to a product of spheres $X$.  The proof is really quite easy; one shows that the universal cover of the boundary of a regular neighborhood of $K$ is a product of spheres.   However there are some details which we divide into eight steps.  \\

\newcounter{step} \refstepcounter{step}
\step{\arabic{step}}  \label{dimK=dimX}  We may assume that the dimension of $K$ equals the dimension of $X$.\\

Note: {\em If the reader is content with the stronger hypothesis that $n \geq \max\{\dim K, \dim X \}$, then this step may be omitted.}

Step \ref{dimK=dimX} follows from the lemma below.

\begin{lemma}
Suppose $L$ is a finite connected CW-complex with finite fundamental group $G$.  Suppose the universal cover $\wt L$ is homotopy equivalent to a finite CW-complex $Y$ whose dimension is 3 or greater.  Then $L$ is homotopy equivalent to a finite CW-complex whose dimension equals that of $Y$.  
\end{lemma}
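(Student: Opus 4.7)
Let $n = \dim Y \geq 3$ and $d = \dim L$. If $d = n$ there is nothing to prove, and if $d < n$ one can wedge $L$ with a contractible $n$-dimensional CW complex (e.g.\ $D^n$ given a CW structure with cells in dimensions $0$, $n-1$, $n$) to reduce to the previous case, so assume $d > n$. The plan is to reduce $\dim L$ by one at a time: chain-theoretically trim the top term of $C_*(\wt L)$, verify that what replaces it remains $\Z G$-projective, then geometrically realize the smaller chain complex as a lower-dimensional finite CW complex by Wall's methods, using $n \geq 3$.

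Let $C_* = C_*(\wt L)$, a bounded chain complex of finitely generated free $\Z G$-modules; since $\wt L \simeq Y$, we have $H_i(C_*) \cong H_i(Y) = 0$ for $i > n$. For the inductive step, suppose $E_*$ is a length-$k$ complex of finitely generated projective $\Z G$-modules, chain homotopy equivalent to $C_*$ over $\Z G$, with $k > n$. The top differential $d_k$ is injective since $H_k(E_*) = 0$, so define $E'_*$ by $E'_{k-1} = E_{k-1}/d_k(E_k)$, $E'_i = E_i$ for $i < k-1$, and zero above. The natural projection $E_* \to E'_*$ is a quasi-isomorphism, and will promote to a $\Z G$-chain homotopy equivalence as soon as $E'_{k-1}$ is $\Z G$-projective. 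Its short exact sequence $0 \to E_k \to E_{k-1} \to E'_{k-1} \to 0$ gives $E'_{k-1}$ finite projective dimension, so it is cohomologically trivial in Tate's sense. By Rim's theorem---a $\Z G$-module for finite $G$ is projective iff it is $\Z$-torsion-free and cohomologically trivial---it then suffices to check $E'_{k-1}$ is $\Z$-torsion-free. For $k - 1 > n$ this is immediate: $H_{k-1}(E_*) = 0$ forces $E'_{k-1}$ to inject into $E_{k-2}$. The delicate case is the final step $k - 1 = n$, where $E'_n$ fits into a short exact sequence $0 \to H_n(Y) \to E'_n \to d_n(E_n) \to 0$; the quotient embeds in $E_{n-1}$, and $H_n(Y)$ is $\Z$-free because it is the top homology of an $n$-dimensional CW complex, hence a subgroup of the free abelian group $C_n(Y)$.

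Iterating $d - n$ times yields a length-$n$ complex $D_*$ of finitely generated projective $\Z G$-modules, chain homotopy equivalent to $C_*(\wt L)$, with $D_i$ free for $i < n$ and $D_n$ merely projective. Since $L$ is finite, its Wall finiteness obstruction $\sum(-1)^i[D_i]$ vanishes in $\wt{K}_0(\Z G)$, forcing $[D_n] = 0$; so $D_n$ is stably free, and adjoining an acyclic piece $(\Z G)^a \xrightarrow{\id}(\Z G)^a$ in degrees $n, n-1$ produces a length-$n$ chain complex $\wt D_*$ of finitely generated free $\Z G$-modules, still chain homotopy equivalent to $C_*(\wt L)$. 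Finally I build a finite $n$-dimensional CW complex $L'$ with $\pi_1(L') = G$ by starting from a $2$-complex with fundamental group $G$ (say the $2$-skeleton of $L$) and attaching equivariant cells of dimensions $3, \ldots, n$ whose attaching maps realize the prescribed differentials of $\wt D_*$; the Hurewicz theorem (valid since $n - 1 \geq 2$) is what turns the chain-level differentials into actual attaching maps. Constructing $L'$ together with a cellular map $L \to L'$ that realizes the chain equivalence, Whitehead's theorem yields the desired homotopy equivalence $L \simeq L'$. The main obstacle is the projectivity verification in the final iteration $k = n+1$, where the $\Z$-freeness of $H_n(Y)$ is exactly what allows Rim's criterion to apply; the rest is standard from Wall's finiteness-obstruction and cell-realization theory.
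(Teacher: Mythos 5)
Your proof is essentially correct, but it takes a genuinely different (and much longer) route than the paper, which simply quotes Corollary 5.1 of Wall's finiteness paper --- a finite connected complex $L$ with $m\geq 3$ has the homotopy type of a finite $m$-complex if and only if $H^i(L;\Z\pi_1L)=0$ for $i>m$ --- and verifies the hypothesis in two lines: since $G$ and $L$ are finite, $\pi_e:\Z G\to\Z$ induces an isomorphism $\Hom_{\Z G}(C_*(\wt L),\Z G)\cong\Hom_{\Z}(C_*(\wt L),\Z)$, whence $H^*(L;\Z G)\cong H^*(\wt L)\cong H^*(Y)$, which vanishes above $\dim Y$. What you have done instead is re-derive the relevant implication of Wall's theorem from scratch: your chain-level trimming, the identification of the stably free top module via the vanishing finiteness obstruction, and the cell-by-cell realization are precisely the ingredients of Wall's own proof, so in the end you are still leaning on the same machinery (the realization step, where attaching maps for the new $(n-1)$- and $n$-cells and the comparison map $L\to L'$ must be produced, is exactly Wall's realization lemma and is the one place your sketch stays at the level of an appeal to ``standard theory''). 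The one genuinely different ingredient is your use of Rim's theorem: where Wall deduces projectivity of $C_{k-1}/d_k(C_k)$ by splitting $d_k$ directly from the cohomological vanishing $H^k(L;C_k(\wt L))=0$, you deduce it from projective dimension $\leq 1$ plus $\Z$-torsion-freeness, with the torsion-freeness at the critical stage $k-1=n$ supplied by the $\Z$-freeness of the top homology $H_n(Y)$ of the $n$-dimensional complex $Y$. This is a correct and rather elegant localization of where the finiteness of $G$ enters (via cohomological triviality) --- but note that it makes your argument intrinsically tied to finite groups, whereas Wall's criterion, and hence the paper's two-line verification, isolates the finiteness of $G$ in a single elementary coefficient computation and would adapt immediately to any situation where $H^*(L;\Z\pi_1L)$ can be controlled. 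In short: correct, but you have rebuilt the engine the paper merely turns the key on; the efficient move here is to cite Wall and spend your effort only on checking his hypothesis.
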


\begin{proof}
Corollary 5.1 of Wall \cite{Wall} states that for a finite connected complex $L$ and an integer $m \geq 3$, $L$ has the homotopy type of a finite  complex of dimension $m$ if and only if $H^i(L; \Z \pi_1L) = 0$ for all $ i > m$.   
Let $\pi_e : \Z G \to \Z$ be the $\Z$-module map $\pi_e(\sum n_g g) = n_e$.  For $\pi_1 L = G$ and $L$ both finite, $\pi_e$ induces a isomorphism $\Hom_{\Z G} (C_*(\wt L), \Z G) \to \Hom_{\Z}(C_*(\wt L), \Z ), \ ~ \varphi \mapsto \pi_e \circ \varphi $ and hence an isomorphism $H^*(L; \Z \pi_1L) \xrightarrow{\cong} H^*(\wt L)$.  Since $H^*(\wt L) \cong H^*(Y)$, Wall's condition is verified.
\end{proof}

\refstepcounter{step}
\step{\arabic{step}}  \label{K_simplicial}  We may assume $K$ is  a finite simplicial complex whose dimension equals the dimension of $X$.\\

Any finite CW-complex is homotopy equivalent to a finite simplicial complex of the same dimension.  This is a consequence of the simplicial approximation theorem and the homotopy extension property, see \cite[Theorem 2C.5]{Hatcher}.  We thus replace the $K$ in our theorem by the homotopy equivalent finite simplicial complex.  
\\
 
\refstepcounter{step}
\step{\arabic{step}}  \label{regular}    Let $N(K)$ be a regular neighborhood of a simplicial embedding of  $K$ in $\R^{n+ \dim X+1}$.  Then $N(K)$ is a compact PL-manifold with boundary and the embedding $K \hookrightarrow N(K)$ is a homotopy equivalence.  
\\

This is all standard PL-topology \cite[Chapter 3]{Rourke-Sanderson}, but we will briefly review.  By general position  a constant map $K \to \R^{n+\dim X + 1}$  is homotopic to  a simplicial embedding $K \hookrightarrow \R^{n + \dim X +1}$.  Take two barycentric subdivisions, and define $N(K)$ to be the  union of all closed simplices of $ (\R^{n + \dim X +1})^{\prime\prime}
$ which intersect $K$.  Then this is a regular neighborhood of $K$, a compact $PL$-manifold with boundary  which collapses onto $K$. In particular the inclusion $K \to N(K)$ is a homotopy equivalence and $N(K) - \partial N(K)$ is an open subset of $\R^{n + \dim X +1}$.\\

\refstepcounter{step}
\step{\arabic{step}}  \label{pi_1} $\pi_1 \partial N(K) \xrightarrow{\simeq} \pi_1 N(K)$ provided $\dim N(K) - \dim K > 2$.
\\

Factor the inclusion as $\partial N(K) \xrightarrow{\alpha} (N(K)-K) \xrightarrow{\beta} N(K)$.  We show that $\alpha$ is a homotopy equivalence by a direct argument and that $\beta$ induces an isomorphism on $\pi_1$ by general position.  I could not find an explicit reference for the fact that $\alpha$ is a homotopy equivalence, but a proof is easily supplied.  Any point $x \in N(K) - K$ is contained in a closed simplex  with vertices $v_1, \dots v_a, w_1, \dots w_b$ with the $v_i \in K$ and the $w_j \in N(K)-K$ and can be expressed as  $x = \sum s_i v_i + \sum t_j w_j$ with $0 \leq s_i, t_j \leq 1$, $\sum s_i + \sum t_j = 1$, and $\sum t_j > 0$. Define a deformation retract $H : (N(K)-K) \times I \to N(K)-K$ from $N(K) - K$ to $\partial N(K)$ by 
$
H(x,t) = (1-t) \sum s_iv_i + \left( 1-t + \frac{t}{\sum t_j}\right)\sum t_jw_j  .
$ 

When $\dim N(K) - \dim K > 2$, transversality  shows that
$
\beta_* : \pi_1(N(K)-K) \to \pi_1N(K)
$
is an isomorphism.  Indeed, any element of $\pi_1 N(K)$ can be represented by a map $S^1 \to N(K)$ transverse to $K$, hence whose image is disjoint from $K$, and likewise for maps $(D^2,S^1) \to (N(K), N(K)-K)$.
\\

\refstepcounter{step}
\step{\arabic{step}}  \label{smooth} Give $N(K)$ the structure of a smooth manifold with boundary so that the smooth structure on its interior is diffeomorphic with that given by considering $N(K) - \partial N(K)$ as a open subset of $\R^{n+\dim X + 1}$.
\\

Note:  {\em If the reader is content with actions that are PL instead of smooth, then this step may be omitted.}

\begin{lemma}
Let $M$ be a PL-manifold with boundary and let $\Sigma_i$ a smooth structure on $M_i = M - \partial M$ inducing the given PL-structure.  Then there is a smooth structure $\Sigma$ on $M$ so that $(M_i, \Sigma|_{M_i})$ is diffeomorphic to $(M_i, \Sigma_i)$.  

\begin{proof}
We use the Fundamental Theorem of Smoothing Theory\footnote{The original source for the proof of the Fundamental Theorem is \cite[Theorem II.4.1]{Hirsch-Mazur}, at least for manifolds without boundary.  For a discussion of how this applies to manifolds with boundary, see \cite[Essay IV, Section 2]{Kirby-Siebenmann}.  For an analogous statement of about smoothing topological manifolds, see \cite[Theorem IV.10.1]{Kirby-Siebenmann}.} which asserts that the smooth tangent bundle gives a bijection
$$
\Phi: \mathcal{S}(M) \to \lift(\tau_M)
$$
where $\mathcal{S}(M)$ is the set of isotopy classes of smooth structures on a PL-manifold $M$, where $\tau_M: M \to BPL$ is a classifying map for the PL-tangent bundle of $M$, and where $\lift(\tau_M)$ is the set of vertical homotopy classes of lifts of $\tau_M$.  A lift is a map $\tilde \tau$ making the diagram
 $$\xymatrix{
& BO\ar[d]^{\pi}\cr
M \ar@{-->}[ur]^{\tilde \tau}\ar[r]^{\tau_M} & BPL\cr
}
$$
commute and a vertical homotopy is a homotopy through lifts.  Furthermore, we assume the  map $\pi$ (induced by the forgetful map) is a fibration.  Given a smooth structure $(M, \Sigma)$ there is a lift $\tilde \tau$ which classifies the smooth tangent bundle and any two lifts are vertically homotopic.  Then one defines $\Phi[M,\Sigma] = [\tilde \tau]$.

To apply the Fundamental Theorem, consider the diagram
$$
\xymatrix{
M_i \ar@{-->}[r]^{\tilde \tau_i} \ar[d]_i  & BO \ar[d]^\pi \cr
M \ar@<-1ex>[u]_r \ar@{-->}[ur]^{\tilde \tau} \ar[r]^{\tau_M} & BPL
}.
$$
Here $i$ is the inclusion, $r$ is a homotopy inverse for $i$ (this exists since   $\partial M$ has a collar in $M$ -- see \cite{Rourke-Sanderson}), and $\tilde \tau_i$ is a classifying map for the smooth tangent bundle of $M_i$.  One may assume 
 that $\pi \circ \tilde \tau_i    = \tau_M \circ i$ by the homotopy lifting property.   Then since $\pi \circ \tilde \tau_i \circ r = \tau_M \circ i \circ r \simeq \tau_M$, the map  $\tilde \tau_i \circ r$ is homotopic to a lift $\tilde \tau$ of $\tau_M$.  Apply the Fundamental Theorem, first  to  $M$ to give a smooth structure and then  to $M_i$ to show the diffeomorphism statement.  


\end{proof}
\end{lemma}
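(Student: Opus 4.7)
The plan is to invoke the Fundamental Theorem of Smoothing Theory, which identifies smooth structures on a PL-manifold (up to isotopy) with vertical homotopy classes of lifts of the PL-tangent map $\tau_M : M \to BPL$ through the forgetful fibration $\pi : BO \to BPL$. Because $\partial M$ has a PL collar in $M$, the inclusion $i : M_i \hookrightarrow M$ is a homotopy equivalence, so classifying lifts defined over $M_i$ should be transportable to classifying lifts defined over all of $M$. This reduces the existence question to a purely homotopy-theoretic one.

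Concretely, I would fix a homotopy inverse $r : M \to M_i$ of $i$. The given smooth structure $\Sigma_i$ produces, via $\Phi$, a classifying lift $\tilde\tau_i : M_i \to BO$ of $\tau_M \circ i$. I would compose to form $\tilde\tau_i \circ r : M \to BO$; since $\pi \circ \tilde\tau_i \circ r = \tau_M \circ i \circ r \simeq \tau_M$, the homotopy lifting property of $\pi$ lets me deform this to an honest lift $\tilde\tau : M \to BO$ of $\tau_M$. Applying $\Phi^{-1}$ on $M$ produces a smooth structure $\Sigma$ with classifying lift $\tilde\tau$.

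The main obstacle is the compatibility statement: one must show that $(M_i, \Sigma|_{M_i})$ is diffeomorphic to $(M_i, \Sigma_i)$, not merely that some smooth structure on $M$ exists. I would argue by tracking classifying lifts on $M_i$. The restriction $\Sigma|_{M_i}$ is classified by $\tilde\tau \circ i$, while the construction arranges that $\tilde\tau \circ i$ is vertically homotopic to $\tilde\tau_i \circ r \circ i \simeq \tilde\tau_i$. Thus both $\Sigma|_{M_i}$ and $\Sigma_i$ have the same image in $\lift(\tau_{M_i})$, and the injectivity of $\Phi$ on $M_i$ forces them to be isotopic as smooth structures, hence diffeomorphic.

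A subsidiary technical point that must be addressed is that the Fundamental Theorem in its classical Hirsch--Mazur form is stated for manifolds without boundary, whereas here $M$ has nonempty boundary. I would rely on the extension of smoothing theory to manifolds with boundary as developed in Kirby--Siebenmann; once this machinery is granted, the argument above goes through verbatim and the collar structure ensures that $i$ and $r$ behave well on the boundary.
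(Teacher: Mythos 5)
Your proposal is correct and follows essentially the same route as the paper: both invoke the Fundamental Theorem of Smoothing Theory, use the collar of $\partial M$ to get a homotopy inverse $r$ of the inclusion $i\colon M_i \hookrightarrow M$, deform $\tilde\tau_i \circ r$ to a lift $\tilde\tau$ of $\tau_M$ via the homotopy lifting property, and then apply the bijection $\Phi$ on $M$ (for existence) and on $M_i$ (for the diffeomorphism statement), with the boundary case handled by citing Kirby--Siebenmann. Your write-up is if anything slightly more explicit than the paper about why $\tilde\tau \circ i$ and $\tilde\tau_i$ give the same class in $\lift(\tau_{M_i})$.
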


\refstepcounter{step}
\step{\arabic{step}}  \label{tubular_X}  Perturb the composite homotopy equivalence $X \to \wt K \to  \wt {N(K)}_i :=(\wt {N(K)} -{\partial \wt {N(K)}})$ to a smooth embedding $X \hookrightarrow \wt {N(K)}_i$.
  Let $N(X)$ be a closed tubular neighborhood of $X$ in $\wt {N(K)}_i$.  Show $\partial N(X)$ is diffeomorphic to $X \times S^n$.  
\\

There is a homotopy equivalence $X \to \wt K$ by hypothesis.  The homotopy equivalence 
$\wt K \to   \wt {N(K)}_i$ is the composite of the universal covers of the inclusion $K \hookrightarrow N(K)$ and the homotopy equivalence $r : N(K) \to N(K)_i$ from Step \ref{smooth}.  By general position (see, e.g. \cite[Chapter 2, Theorem 2.13]{Hirsch}) the composite homotopy equivalence $X \to \wt K \to  \wt {N(K)}_i$ is homotopic to a smooth embedding since $\dim \wt {N(K)}_i \geq 2 \dim X + 1$.

We next will show the normal bundle of the embedding is trivial and conclude that  $\partial N(X)$ is diffeomorphic to $X \times S^n$.  Let $\nu$ be the normal bundle $\nu(X \hookrightarrow  \wt {N(K)}_i)$. Note $ \tau_{\wt{N(K)}_i}$ is trivial since $\wt{N(K)_i}$ is the universal cover of an open subset of Euclidean space.  
 Then
$$
\nu   \oplus  \tau_X = \tau_{\wt{N(K)}_i}|_{X} \cong \varepsilon^{n + \dim X +1}
$$
where $\varepsilon^k$ represents the rank $k$ trivial bundle.
Note also that $\tau_{S^{n_i}} \oplus \varepsilon \cong \varepsilon^{n_i + 1}$ since the normal bundle of $S^{n_i} \subset \R^{n_i+1}$ is trivial, hence
$$
\tau_X \oplus \varepsilon^k = (\tau_{S^{n_1}} \oplus \varepsilon) \times \dots \times (\tau_{S^{n_k}} \oplus \varepsilon) \cong \varepsilon^{\dim X + k}
$$
Thus
$$
\nu  \oplus \varepsilon^{\dim X + k} \cong  \nu   \oplus  \tau_X  \oplus \varepsilon^k \cong \varepsilon^{n + \dim X +  k + 1}
$$
Thus, since $\rk \nu > \dim X$, $\nu$ is trivial (see e.g. \cite[Theorem 9.1.5]{Husemoller}).

The manifold $N(K)_i$ inherits a Riemannian metric since it covers an open subset of Euclidean space.  Then $\partial N(X)$ is diffeomorphic to the sphere bundle $S(\nu)$, which is in turn diffeomorphic to $X \times S^n$, since $\nu$ is trivial.  
\\

\refstepcounter{step}
\step{\arabic{step}}  \label{h-cobordism}  Apply the h-cobordism theorem to the triad
$$(\wt {N(K)} - N(X)_i ; ~\partial \wt {N(K)}, \partial N(X)),$$
and conclude that there is a free action on $X \times S^n$.
\\

Clearly $\wt {N(K)} - N(X)_i$  is a smooth compact manifold  with disjoint boundary components $\partial \wt {N(K)}$ and $\partial N(X)$.  We next show that the manifold and each of the boundary components are simply-connected.  Step \ref{pi_1} shows that $\partial \wt {N(K)}$ is simply-connected, and  $\partial N(X) \cong X \times S^n$ is simply-connected.  We claim the composite
$$
\wt {N(K)} - N(X)_i  \hookrightarrow \wt {N(K)} - X \to \wt {N(K)}
$$
is an isomorphism on fundamental groups, hence the domain is simply-connected.  The first map is a homotopy equivalence since $N(X)$ is a tubular neighborhood of $X$ and the second map induces an isomorphism on fundamental groups by general position, as in Step \ref{pi_1}.  

To show that the simply-connected compact manifold triad is an\linebreak h-cobordism, it suffices to show that the relative integral homology of the manifold relative to one of the boundary components vanishes.  But
$$
H_*(\wt {N(K)} - N(X)_i , \partial N(X)) \xrightarrow{\cong} H_*(\wt{N(K)}, N(X)) \xleftarrow{\cong}  
H_*(\wt{N(K)}, X) \cong 0
$$
where the first isomorphism is by excision, the second is by homotopy invariance, and the third is by hypothesis.

Our simply-connected smooth h-cobordism is diffeomorphic to a product (see \cite{Milnor}).  Thus, $\partial \wt {N(K)}$ is diffeomorphic to $\partial {N(X)}$ which, 
by Step \ref{tubular_X}, is diffeomorphic to $X \times S^n$.  Thus there is a diffeomorphism $f : \partial \wt {N(K)} \to X \times S^n$.  We thus have a free $G$-action on $X \times S^n$ where $g(x,y ) : = f(g(f^{-1}(x,y)))$.
 \\

\refstepcounter{step}
\step{\arabic{step}}  \label{homology} Analyze the action on homology of $\partial \wt{N(K)}$.
\\

We start by presenting a careful argument for the following:  $N(K)_i$ is an open subset of Euclidean space, hence orientable, hence so are $N(K)$ and $\wt{N(K)}$ and the $G$-action on $\wt{N(K)}$ is orientation-preserving.  

A real vector bundle over a space $B$ determines an orientation double cover $B_o \to B$.  The bundle is {\em orientable} if $B_o \to B$ has a section.  A choice of section is called an {\em orientation}.  Given a map $f : B' \to B$ covered by a map of vector bundles, the orientation double cover of $B'$ is the pullback of the orientation double cover of $B$ along $f$.  An {\em orientation of a manifold} (possibly with boundary) is an orientation of its tangent bundle.

Let $M$ be a manifold with boundary, $M_i = M - \partial M$, and $C$ be an open collar of the boundary.  An orientation on $M_i$ determines an orientation on $C_i$, hence one on $C$, since $C \simeq C_i$.  The orientations on $M_i$ and $C$ agree on their intersection $C_i$, hence determine an orientation on $M = M_i \cup C$.  A regular $G$-cover $\wt M \to M$ is covered by a map of tangent bundles, hence an orientation of $M$ determines one of $\wt M$.  Since the $G$-action on $\wt M$ covers the identity on $M$, the $G$-action preserves the orientation on $\wt M$.

Orient $\R^{\dim X + n + 1}$.  This orients $N(K)_i$ and hence, by the above, $N(K)$ and $\wt {N(K)}$.  Since $\wt {N(K)}$ is compact, there is a fundamental class $[\wt{N(K)}]$ and a Poincar\'e-Lefschetz  isomorphism.  
$$
\cap [\wt{N(K)}] : H^{\dim X + n + 1-i}(\wt {N(K)}, \partial \wt {N(K)} ) \to H_i(\wt {N(K)})
$$
for all $i$.  Since the $G$-action preserves the orientation by the above paragraph, $G$ leaves the fundamental class invariant.  

If $G$ acts on a pair of spaces $(X,Y)$, give $H_i(X,Y)$, $H^i(X,Y)$ and\linebreak $H_i(X,Y)^* := \Hom_\Z(H_i(X,Y), \Z)$ the structure of left $\Z G$-modules using, for $g \in G$, the maps $a \mapsto g_*a$,  $(\alpha \mapsto (g^{-1})^*\alpha)$, and $(\varphi \mapsto (a \mapsto \varphi((g^{-1})_* a )))$ respectively.  With these conventions and with $a \in H_N(X,Y)$ invariant under $G$, the following maps are maps of left $\Z G$-modules.
\begin{align*}
\cap [a] & : H^i(X,Y) \to H_{N-i}(X) \\ 
\UCT & : H^i(X,Y) \to H_i(X,Y)^*  \qquad (\alpha \mapsto (a \mapsto \langle \alpha, a \rangle )
\end{align*}
If the homology of $H_j(X,Y)$ is a finitely generated free abelian group for all $j$, then the $\UCT$ map is an isomorphism.  

Thus for all $i$ and for $N = \dim X + n + 1 = \dim N(K)$, we have isomorphisms of $\Z G$-modules
\begin{multline*}
H_i(\wt K) \cong H_i(\wt {N(K)}) \cong H^{N-i}(\wt {N(K)}, \partial \wt {N(K)} ) \cong H_{N-i}(\wt {N(K)}, \partial \wt {N(K)} )^*
\end{multline*}
Now if the $G$-action on $\wt K$ is homologically trivial, then we conclude that the $G$-action on $H_{N-i}(\wt {N(K)}, \partial \wt {N(K)} )^*$ is trivial, and 
since the relative homology group is free abelian, 
 the $G$-action on $H_*(\wt {N(K)}, \partial \wt {N(K)} )$ is trivial.  By looking at the exact sequence of the pair $(\wt {N(K)}, \partial \wt {N(K)} )$, one concludes that for all $i$ there is a short exact sequence of $\Z G$ modules 
$$
0 \to A_i \to H_i(\partial \wt {N(K)}) \to B_i \to 0
$$
where $A_i$ and $B_i$ have trivial $G$-actions.  Apply $ - \otimes \Q$ and use the all $\Q G$-modules are projective (Maschke's Theorem) to conclude that $H_i(\partial \wt {N(K)}) \otimes \Q \cong A_i \otimes \Q \oplus B_i \otimes \Q$ has trivial $G$-action.  Hence the submodule
$H_i(\partial \wt {N(K)})$ also has trivial $G$-action.  This completes the proof of our main theorem.  

\begin{remark}
The $G$-action on $H_*(X \times S^n)$ could be completely analyzed even when the action of $G$ on $H_*(\wt K)$ is nontrivial.

 In the statement of our main theorem, the product of spheres could be replaced by any smooth, closed, stably parallelizable manifold.   It is also true that if a finite group $G$ acts  cellularly on a finite CW-complex $Z$ with isotropy of rank at most one, and if  $Z$ has the homotopy type of a  closed, smooth, parallelizable manifold $X$, then $G$ acts freely and smoothly on $X \times S^m \times S^n$ for some $m,n > 0$.  This follows from the first sentence of this paragraph and  Theorem 1.4 of Adem and Smith \cite{AS} which implies that $G$ acts freely and cellularly on a finite complex $Y \simeq Z \times S^m$ for some $m > 0$.  
  \end{remark}

\begin{center}
{\sc Acknowledgements}
\end{center}

I would like to thank Diarmuid Crowley and Chuck Livingston for reminding me, on separate occasions, that the interior of a regular neighborhood is an open subset of Euclidean space, and hence a smooth, oriented manifold.   I thank the referee for helpful comments.

This research has been supported by the National Science Foundation grant DMS-1210991.  The research was inspired by a visit to Bo\u gazi\c ci University, where the visit was supported by the Bo\u gazi\c ci University Foundation.

\noindent James F. Davis\\
Department of Mathematics\\ Indiana University \\Bloomington,
Indiana 47405  USA \\Email: jfdavis@indiana.edu

\end{document}